\newcommand{\Isom}{\operatorname{Isom}}
\newcommand{\Z}{\mathbb{Z}}
\newcommand{\R}{\mathbb{R}}
\newcommand{\C}{\mathbb{C}}
\newcommand{\HH}{\mathbb{H}}
\newcommand{\OO}{\operatorname{O}}
\newcommand{\SO}{\operatorname{SO}}
\newcommand{\so}{\frak{so}}
\newcommand{\U}{\operatorname{U}}
\newcommand{\SU}{\operatorname{SU}}
\newcommand{\su}{\frak{su}}
\newcommand{\PSU}{\operatorname{PSU}}
\newcommand{\Sp}{\operatorname{Sp}}
\newcommand{\spp}{\frak{sp}}
\newtheorem{theorem}{Theorem}
\newtheorem{lemma}[theorem]{Lemma}
\newtheorem{proposition}[theorem]{Proposition}
\newtheorem{maintheorem}{Theorem}
\theoremstyle{definition}
\theoremstyle{remark}
\newtheorem{remark}[theorem]{Remark}
\newtheorem{example}[theorem]{Example}
\title{Lifting isometries of orbit spaces}
\date{}
\author[R.~A.~E.~Mendes]{Ricardo~A.~E.~Mendes}
\address{University of Oklahoma, USA}
\email{ricardo.mendes@ou.edu}
\thanks{The author has been supported by the NSF grant DMS-2005373}
\subjclass[2010]{57S15, 51F99}
\keywords{Orbit space, isometry group}
\begin{document}

\begin{abstract}
Given an orthogonal representation of a compact group, we show that any element of the connected component of the isometry group of the orbit space lifts to an equivariant isometry of the original Euclidean space. Corollaries include a simple formula for this connected component, which applies to ``most'' representations.
\end{abstract}

\maketitle
\todo{added support information at the bottom}

\section{Introduction}
Given a metric space $X$, an \emph{isometry} of $X$ is defined as a bijection that preserves the distance function. The set $\Isom(X)$ of all isometries of $X$ forms a group under composition, which is called the \emph{isometry group} of $X$.

An early result \cite{DantzigWaerden28} (see also \cite[pages 46--50]{KobayashiNomizu1}) states that if $X$ is connected and locally compact, then $\Isom(X)$, endowed with the compact-open topology, is also locally compact, and the isotropy group $\Isom(X)_x$ is compact for each $x\in X$. Moreover, if $X$ is itself compact, then so is $\Isom(X)$. Shortly thereafter, Myers and Steenrod proved that $\Isom(X)$ is a Lie transformation group when $X$ is a Riemannian manifold, see \cite{MS39}, \cite[page 41]{Kobayashi95}. Generalizing this, Fukaya and Yamaguchi have shown \cite{FY94} that $\Isom(X)$ is a Lie group whenever $X$ is an \emph{Alexandrov space}. Since then isometries of Alexandrov spaces have been actively studied, see for example \cite{GGG13, HS17}.

In the present article, we consider a special class of Alexandrov spaces. Namely, given a compact group $G$ acting linearly by isometries on a Euclidean vector space $V$, the \emph{orbit space} $X=V/G$ is an Alexandrov space with curvature bounded from below by $0$. A natural subgroup of $\Isom(V/G)$ consists of the isometries induced by  the \emph{$G$-equivariant} isometries of $V$, that is, isometries that commute with the $G$-action (see also Remark \ref{R:normalizer} below). Our main result is that, up to taking connected components, all isometries of $V/G$ arise in this way:
\begin{maintheorem}
\label{MT}
Let $V$ be an orthogonal representation space of the compact group $G$. Then, any element in the connected component $\Isom(V/G)_0$ of the Lie group $\Isom(V/G)$ lifts to a $G$-equivariant isometry of $V$.
\end{maintheorem}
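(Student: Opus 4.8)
The plan is to reduce the statement to an infinitesimal one and then solve it by lifting Killing fields. Write $N$ for the Lie group of $G$-equivariant isometries of $V$; since $G\subseteq\OO(V)$ fixes the origin, these are exactly the maps $x\mapsto Ax+b$ with $A\in\OO(V)$ commuting with $G$ and $b\in V^G$, so that $N=C_{\OO(V)}(G)\ltimes V^G$ and its Lie algebra consists of the $G$-invariant affine Killing fields $x\mapsto Ax+b$ with $A\in\so(V)$, $[A,g]=0$ for all $g\in G$, and $b\in V^G$. There is a continuous homomorphism $\Phi\colon N\to\Isom(V/G)$, and I only need its image to contain $\Isom(V/G)_0$. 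For this it suffices that $d\Phi_e$ be surjective onto the Lie algebra $\mathfrak{isom}(V/G)$ of Killing fields, since then $\Phi$ is a submersion at the identity, its image contains a neighborhood of the identity, and, being a subgroup, it then contains the whole identity component. Thus the theorem reduces to the claim that every Killing field on $V/G$ is the projection of a $G$-invariant affine Killing field on $V$.

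Before attacking this I would dispose of the Euclidean factor. Splitting $V=V^G\oplus W$ gives a metric product $V/G=V^G\times(W/G)$, where $W/G$ is a Euclidean cone with apex the image of $0$; correspondingly $\Isom(V/G)_0$ splits, the $V^G$-factor is already covered by translations in $N$, and on the cone factor (which has no line factor once $V^G=0$) the identity component fixes the apex. So I may assume $V^G=0$ and restrict attention to Killing fields vanishing at the apex, or equivalently pass to the link $\Sigma=S(V)/G$ and lift isometries of $\Sigma$ to equivariant isometries of the sphere $S(V)$.

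Now fix a Killing field $K\in\mathfrak{isom}(V/G)$ with flow $\phi_t$. Restricting to the principal stratum $M\subseteq V/G$, a Riemannian manifold, $K$ becomes a genuine (indeed real-analytic) Killing field, and I have the $G$-invariant Riemannian submersion $\pi\colon V_r\to M$ from the principal stratum $V_r\subseteq V$. The lift $\tilde K$ I am after is a $G$-invariant Killing field on $V$ projecting to $K$, and I want to emphasize that $\tilde K$ is in general \emph{not} the horizontal lift of $K$. For the Hopf example $V=\C^2$ with $G=S^1$ acting by scalars, the linear lift $x\mapsto\mathcal A x$ of an isometry with $\mathcal A\in\su(2)$ already has a nonzero vertical component, and an O'Neill computation shows that the horizontal lift alone fails the Killing equation by terms built from the O'Neill integrability tensor and the second fundamental form of the orbits. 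The real task is therefore to produce, along each fiber, the correct vertical correction, equivalently to lift the isometries $\phi_t$ themselves to $G$-equivariant isometries of $V_r$.

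The main obstacle is precisely this lifting, and I expect it to turn on a rigidity statement: an isometry of $V/G$ preserves not only the stratification (so that $\phi_t(M)=M$ and orbit types are respected) but also the \emph{orbit geometry}, i.e.\ the induced metric and the second fundamental form of the fibers of $\pi$. Granting this, the vertical correction is forced and canonical, $\phi_t$ lifts fiberwise to a $G$-equivariant isometry of $V_r$, and $\tilde K$ exists as a $G$-invariant Killing field on $V_r$. To establish the rigidity I would encode the orbit-geometric data metrically: the orbit-volume density and the fiber mean curvature should be recoverable from the behavior of $V/G$ near the singular strata and, crucially, from the flatness of $V$, which through the O'Neill formulas rigidly constrains the fiber geometry in terms of the curvature of $M$ that $\phi_t$ already preserves. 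Finally, once $\tilde K$ is known to be Killing on the open dense set $V_r$ of the flat space $V$, I would invoke Nomizu's extension theorem for local Killing fields (using $G$-invariance to reconcile the components of $V_r$ that $G$ permutes) to promote it to a global affine Killing field $x\mapsto Ax+b$; its $G$-invariance then forces $[A,g]=0$ for all $g\in G$ and $b\in V^G$, so $\tilde K\in\mathrm{Lie}(N)$ projects to $K$, which completes the reduction and hence the proof.
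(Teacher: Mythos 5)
Your overall scaffolding is sound: the reduction from the group statement to the Lie-algebra statement (a continuous homomorphism with surjective differential has open, hence full, image in the identity component), the removal of the trivial factor $V^G$, and the final Nomizu-type extension of a Killing field from the connected open dense set $V_r$ to an affine Killing field on all of $V$ are all correct steps, and the first two agree with the paper's own reductions. But the heart of the proof is missing. Producing the vertical correction --- equivalently, lifting the flow $\phi_t$ to $G$-equivariant isometries of $V_r$ --- is not a technical detail; it \emph{is} the theorem, and at exactly this point your argument says ``I expect it to turn on a rigidity statement'' and ``granting this.'' The paper resolves precisely this step by invoking two nontrivial published results: Alexandrino--Radeschi (the action of $\Isom(V/G)_0$ on $V/G$ is smooth in the sense of Schwarz) and Schwarz's covering-isotopy theorem (a smooth isotopy of the orbit space lifts to a $G$-equivariant diffeotopy of $V$), after which a short rescaling argument ($\tilde{r}_\lambda^{-1}\tilde{\varphi}\tilde{r}_\lambda \to d\tilde{\varphi}_0$ as $\lambda\to 0$) converts the equivariant diffeomorphism lift into a linear, hence orthogonal, one. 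Your sketch proposes to re-derive the force of these results from an O'Neill computation that is never carried out.

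Moreover, the route you propose for the rigidity lemma cannot work as described. You want to recover the fibers' induced metric and second fundamental form from metric data of $V/G$ alone (behavior near singular strata, plus flatness of $V$ fed through the O'Neill formulas). Any such argument is insensitive to whether the isometry lies in $\Isom(V/G)_0$, so it would prove that \emph{every} isometry of $V/G$ preserves orbit geometry. That is false: for $G=\SO(2)\times\SO(3)$ acting on $\R^2\oplus\R^3$, the quotient is a flat sector of angle $\pi/2$, and the reflection across the bisecting ray is an isometry exchanging boundary points whose orbits are a circle and a $2$-sphere --- this is exactly the paper's example of a non-liftable isometry. Hence the quotient metric does not determine fiber geometry, and connectedness of the isometry to the identity must enter the rigidity argument in an essential way; your sketch never uses it. Finally, even granting rigidity, the claim that the vertical correction is ``forced and canonical'' needs proof: fibers $G/H$ can admit isometries beyond those induced by $G$, so one must choose fiberwise isometries smoothly, $G$-equivariantly, compatibly with the horizontal distribution, and without monodromy obstructions over $M$ --- which is again the content of the results the paper cites.
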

The proof consists of showing that every isometry of $V/G$ that lifts to a diffeomorphism actually lifts to an isometry; and that every isometry in $\Isom(V/G)_0$ lifts to a diffeomorphism. The second statement follows from two results about the \emph{smooth structure} of $V/G$ (in the sense of G. Schwarz), namely \cite{AR17}, and  \cite[Corollary (2.4)]{Schwarz80}.

We note that elements of $\Isom(V/G)\setminus\Isom(V/G)_0$ do not necessarily lift, and that Theorem \ref{MT} does not generalize from Euclidean spaces to general Riemannian manifolds. See Section \ref{S:lifting} for details. 


Denote\todo{shortened this paragraph, and referred to the new Section 2} by $SV\subset V$ the unit sphere. Then  $\Isom(SV/G)$ can be identified with the isometries of $V/G$ fixing the origin, and forms a maximal compact subgroup of $\Isom(V/G)$, see Lemma \ref{L:trivial}(a) below.

Recall that every compact, connected Lie group is, up to a finite cover, a product of a torus and compact, simply-connected, simple Lie groups \cite[Theorem 8.1]{BroeckertomDieck}. The latter are classified, and fall into two types: classical ($\SU(n)$, $\operatorname{Spin}(n)$, and $\Sp(n)$), and exceptional ($G_2$, $F_4$, $E_6$, $E_7$, and $E_8$). As a corollary of Theorem \ref{MT}, we obtain the following structure result:
\begin{maintheorem}
\label{MT:cor}
Let $V$ be an orthogonal\todo{added ``orthogonal''} representation of a compact Lie group $G$. Then, up to a finite cover, $\Isom(SV/G)_0$ is isomorphic to a product of a torus and compact simple Lie groups of classical type. \end{maintheorem}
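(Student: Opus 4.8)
The plan is to deduce Theorem B from Theorem A (actually Theorem \ref{MT}) together with the classification of compact simple Lie groups. By Theorem \ref{MT}, every element of $\Isom(V/G)_0$ lifts to a $G$-equivariant isometry of $V$. The origin of $V$ maps to the cone point of $V/G$ and is therefore fixed by the linear lift, so the lift is an orthogonal transformation of $V$ commuting with the $G$-action; in other words, it lies in the equivariant orthogonal group, which I will think of as the centralizer of (the image of) $G$ in $\OO(V)$. Restricting to the unit sphere, $\Isom(SV/G)_0$ is covered by the identity component of this centralizer. Thus the structural problem reduces to understanding the identity component of the centralizer $C$ of a compact subgroup inside an orthogonal group.

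First I would decompose $V$ into isotypic components according to the $G$-representation. Schur's lemma (in its real form) tells us that the commutant algebra of a real orthogonal representation is a product of matrix algebras over the three real division algebras $\R$, $\C$, and $\HH$, one factor for each irreducible summand up to the appropriate equivalence. Consequently the equivariant orthogonal group $C$ is isomorphic to a product of classical groups: over a real-type summand of multiplicity $m$ one gets $\OO(m)$, over a complex-type summand of multiplicity $m$ one gets $\U(m)$, and over a quaternionic-type summand of multiplicity $m$ one gets $\Sp(m)$. Taking identity components replaces $\OO(m)$ by $\SO(m)$, so $C_0$ is a product of groups of the form $\SO(m)$, $\U(m)$, and $\Sp(m)$.

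The final step is to pass to the Levi/semisimple structure and read off the simple factors. Each $\SO(m)$, $\U(m)$, $\Sp(m)$ is a compact connected Lie group; up to finite cover its universal structure is a torus times simply-connected simple factors. Crucially, all the simple factors appearing — $\operatorname{Spin}(m)$ (the cover of $\SO(m)$), $\SU(m)$ (the simple part of $\U(m)$, whose center contributes to the torus), and $\Sp(m)$ — are of classical type; no exceptional factor can occur. Since $\Isom(SV/G)_0$ is covered by $C_0$, and passing to finite covers and products preserves the property of being a torus times classical simple groups, the theorem follows.

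The main obstacle, and the real content, is the exclusion of exceptional factors: a priori one must be sure that the centralizer construction can never produce $G_2, F_4, E_6, E_7, E_8$. This is exactly what the Schur-lemma classification guarantees, since the commutant of any real representation is forced into the three families above, whose automorphism/classical structure is rigid. I would be careful to check that taking the identity component and then a maximal torus times semisimple decomposition genuinely avoids the exceptional groups in \emph{every} case, including low-rank coincidences (for instance the exceptional isomorphisms $\SO(6)\cong\SU(4)/\{\pm1\}$ type identities), which only reinforce that every factor remains classical. I would also remark that the simply-connected cover of $\Isom(SV/G)_0$ need not equal $C_0$ on the nose, which is why the statement is only up to finite cover.
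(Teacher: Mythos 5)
Your overall route is the same as the paper's: combine Theorem \ref{MT} with Schur's Lemma to identify the identity component of the group of $G$-equivariant isometries fixing the origin (your centralizer $C_0$, the paper's $\Isom_G(V)_0$) with a product of groups $\SO(n_i)$, $\U(n_i)$, $\Sp(n_i)$, and then transfer this structure to $\Isom(SV/G)_0$. The gap is in the transfer step. You assert that $\Isom(SV/G)_0$ is \emph{covered} by $C_0$, and both your closing justification (``passing to finite covers and products preserves the property'') and your final remark about the simply-connected cover treat the surjection $p\colon C_0\to\Isom(SV/G)_0$ as a map with discrete kernel. That is false in general: $\ker(p)$ can be positive-dimensional. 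The paper's own example makes the point: for the Hopf action of $G=\U(1)$ on $\C^2$ one has $C_0\cong\U(2)$ and $\Isom(SV/G)_0\cong\SO(3)$, with $\ker(p)\cong\U(1)$ the center of $\U(2)$; here $p$ is a quotient by a one-dimensional subgroup, not a covering (the dimensions do not even agree). So the fact you invoke --- stability of the class ``torus times classical simples'' under finite covers and products --- is not the fact you need.

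What is needed is that this class is closed under quotients by \emph{arbitrary} closed normal subgroups, and this is exactly the content of the paper's argument, carried out at the Lie algebra level: writing $\frak{h}=\frak{a}\oplus\frak{s}_1\oplus\dots\oplus\frak{s}_k$ with $\frak{a}$ abelian and each $\frak{s}_i$ isomorphic to $\so(n)$, $\su(n)$, or $\spp(n)$, any ideal $I\subset\frak{h}$ must be the direct sum of a subspace of $\frak{a}$ with a sub-collection of the $\frak{s}_i$ (by uniqueness of simple ideals), so the quotient $\frak{h}/I$ is again abelian plus classical simple summands; in particular no exceptional factor can be created by passing to a quotient. Inserting this step repairs your proof and makes it essentially identical to the paper's. (A minor difference: the paper first reduces to the case of no trivial factors via Lemma \ref{L:trivial}; your variant can avoid this because elements of $\Isom(SV/G)_0$ fix the cone point by definition, though you should then note that their equivariant lifts fix the origin because $\{0\}$ is the fiber over the cone point, and hence are linear and orthogonal.)
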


As another corollary of Theorem \ref{MT}, we obtain:
\begin{maintheorem}
\label{MT:cor2}
Let $V$ be an irreducible orthogonal representation of a compact Lie group $G$. Then  $\Isom(V/G)_0$ has rank at most one.
\end{maintheorem}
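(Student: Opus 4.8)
The plan is to invoke Theorem~\ref{MT} to reduce the statement to a computation of the group of $G$-equivariant isometries of $V$, which for an irreducible representation is governed by the real form of Schur's Lemma. First I would set aside the trivial representation $V=\R$ (where $V/G=\R$ and $\Isom(V/G)_0$ consists of translations, of rank $0$), and otherwise assume $V$ is nontrivial, so that $V^G=0$.

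Next I would describe all $G$-equivariant isometries of $V$ explicitly. Since $V$ is Euclidean, every isometry is affine of the form $x\mapsto Ax+b$ with $A\in\OO(V)$; imposing commutativity with the linear $G$-action forces $b\in V^G=0$ and $A$ to commute with $G$. Hence the group $N$ of $G$-equivariant isometries equals the group of orthogonal elements of the real endomorphism algebra $D=\operatorname{End}_G(V)$. By Schur's Lemma $D$ is a finite-dimensional real division algebra, so $D\cong\R$, $\C$, or $\HH$, and correspondingly $N\cong\{\pm 1\}$, $\U(1)$, or $\Sp(1)$. In every case $N$ is a compact Lie group of rank at most one.

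Finally I would transfer this bound to the orbit space. Passing to orbits defines a continuous homomorphism $\pi\colon N\to\Isom(V/G)$ with compact image, and Theorem~\ref{MT} gives precisely the inclusion $\Isom(V/G)_0\subseteq\pi(N)$, since every element of $\Isom(V/G)_0$ lifts to a $G$-equivariant isometry. As the rank of a continuous homomorphic image of a compact Lie group cannot exceed the rank of the source, and the rank of a closed connected subgroup cannot exceed that of the ambient group, the chain $\Isom(V/G)_0\subseteq\pi(N)$, with $\operatorname{rank}N\le 1$, yields that $\Isom(V/G)_0$ has rank at most one.

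The genuinely substantive input is Theorem~\ref{MT}; granting it, the remaining content is short. I expect the only point requiring real care to be the reduction from ``equivariant isometry'' to ``linear equivariant isometry'': Theorem~\ref{MT} a priori produces an affine equivariant map, and it is the vanishing $V^G=0$ (valid exactly because $V$ is irreducible and nontrivial) that kills the translational part and identifies $N$ with $\OO(D)$. The division-algebra trichotomy and the monotonicity of rank under quotients and subgroups are then standard.
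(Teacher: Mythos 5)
Your proof is correct and takes essentially the same route as the paper: Theorem \ref{MT} supplies the passage from $G$-equivariant isometries down to $\Isom(V/G)_0$, and Schur's Lemma identifies the equivariant isometry group as the unit orthogonal elements of $\R$, $\C$, or $\HH$ (the paper phrases this as the case $k=1$, $n_1=1$ of its isotypic decomposition $\Isom_G(V)_0=\prod_i H_i$ with $H_i\cong\SO(n_i),\U(n_i),\Sp(n_i)$), after which rank monotonicity under subgroups and quotients finishes the argument. Your explicit elimination of the translational part via $V^G=0$ is the same reduction the paper performs by assuming no trivial factors (Section \ref{S:trivial}), so the two arguments differ only in presentation.
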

For a more detailed analysis of the case where $V$ is irreducible, see the end of Section \ref{S:computing}. 

As a final corollary of Theorem \ref{MT}, we give a simple formula for $\Isom(V/G)_0$ in the special, but in some sense typical, case where $V/G$ has no boundary, see Proposition \ref{P:boundary} and Remark \ref{R:boundary}.

This article is organized as follows. Section \ref{S:trivial} contains well-known preliminary facts about trivial factors.\todo{added sentence about new section 2} Section \ref{S:lifting} contains the proof of Theorem \ref{MT} and a few remarks. In Section \ref{S:computing} we extract some consequences of  Theorem \ref{MT} with the help of Schur's Lemma, and in particular prove Theorems \ref{MT:cor} and \ref{MT:cor2}, and we also describe the relationship with presence of boundary in the orbit space.


\subsection*{Acknowledgements}
I would like to thank C. Gorodski and A. Lytchak for discussions regarding the lifting of isometries that led to this project, and the anonymous referee\todo{added thanks to referee} for suggestions that improved the presentation.

\section{Trivial factors}
\label{S:trivial} \todo{New section}
This section contains preliminary observations regarding trivial factors, which are used multiple times in what follows. Let $G$ be a compact group acting linearly by isometries on the Euclidean vector space $V$. 

Denote by $SV\subset V$ the unit sphere in $V$, centered at the origin. Since $SV$ is preserved by the $G$-action, we have a well-defined quotient $SV/G$, and $\Isom(SV/G)$ can be identified with the subgroup of $\Isom(V/G)$ consisting of the isometries of $V/G$ which fix (the image of) the origin.

Denote by $F\subset V$ the subspace of vectors fixed by $G$, that is, the sum of the trivial factors of the $G$-representation $V$, and by $F^\perp$ its orthogonal complement in $V$. Thus $V$ can be identified with $F\times F^\perp$, where $G$ acts only on the second factor, and the quotient $V/G$ splits isometrically as the product $V/G=F\times (F^\perp/G)$. 

By  \cite[Section 5.1]{GL14}, the isometry group of $V/G$ also splits as
\[ \Isom(V/G)=\Isom(F)\times \Isom(F^\perp/G), \]
and, moreover, any isometry of $F^\perp/G$ must fix the origin. In other words, $\Isom(F^\perp/G)=\Isom(SF^\perp/G)$. For convenience, we gather a few consequences of these facts in the following:
\begin{lemma} \label{L:trivial}
In the notation above, we have: 
\begin{enumerate}[(a)]
\item The group $\Isom(V/G)$ has
\[ \Isom(SV/G)=\OO(F)\times \Isom(F^\perp/G)\]
as a maximal compact subgroup.
\item The connected component containing the identity splits as
\[ \Isom (V/G)_0=\Isom(F)_0\times \Isom(F^\perp/G)_0.\]
\item An isometry $(\phi, \psi)\in  \Isom(V/G)=\Isom(F)\times \Isom(F^\perp/G)$ lifts to a $G$-equivariant isometry of $V$ if and only if $\psi$ lifts to a $G$-equivariant isometry of $F^\perp$.
\end{enumerate}
\end{lemma}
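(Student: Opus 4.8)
The plan is to deduce all three parts from the two structural facts just recalled: the isometric splitting $\Isom(V/G) = \Isom(F) \times \Isom(F^\perp/G)$, together with the identification $\Isom(F^\perp/G) = \Isom(SF^\perp/G)$ of the second factor as a group all of whose elements fix the origin. Parts (a) and (b) are essentially bookkeeping with this product structure, while part (c) carries the actual content.

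For part (a), I would first identify the stabilizer of the origin. Under the splitting, a pair $(\phi,\psi)$ fixes the image of the origin $(0,\bar 0)$ precisely when $\phi(0)=0$, since the condition on $\psi$ is automatic: every element of $\Isom(F^\perp/G)$ already fixes $\bar 0$. As the isometries of the Euclidean space $F$ fixing $0$ are exactly $\OO(F)$, this gives $\Isom(SV/G)=\OO(F)\times\Isom(F^\perp/G)$. Compactness follows because $\OO(F)$ is compact and $\Isom(F^\perp/G)=\Isom(SF^\perp/G)$ is the isometry group of the compact space $SF^\perp/G$, hence compact by the result recalled in the introduction. For maximality I would use that $\Isom(F)$ is the Euclidean group $F\rtimes\OO(F)$, whose maximal compact subgroups are the point stabilizers, all conjugate to $\OO(F)$ (a compact group of Euclidean isometries has a fixed point); since the second factor is already compact, a maximal compact subgroup of the product is $\OO(F)\times\Isom(F^\perp/G)$, as claimed. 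Part (b) is then immediate, since for a product of topological groups the identity component splits as the product of the identity components.

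The substance lies in part (c), and the key observation is that a $G$-equivariant isometry of $V$ must respect the splitting $V=F\times F^\perp$. Writing such an isometry as $v\mapsto Av+b$ with $A\in\OO(V)$ and $b\in V$, equivariance evaluated at $v=0$ forces $b\in F$, and the remaining identity $Agv=gAv$ says that $A$ is a $G$-equivariant linear isometry; any such $A$ preserves the fixed subspace $F$ and its orthogonal complement $F^\perp$, restricting to some $A_F\in\OO(F)$ and a $G$-equivariant $A_{F^\perp}\in\OO(F^\perp)$. Thus every lift has the form $(f,w)\mapsto(A_F f+b,\,A_{F^\perp}w)$ and descends to the pair $(\phi,\psi)$ with $\phi(f)=A_F f+b$ and $\psi$ induced by $A_{F^\perp}$, which exhibits $A_{F^\perp}$ as a $G$-equivariant isometric lift of $\psi$ and proves the forward implication. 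Conversely, given a $G$-equivariant isometric lift $\tilde\psi$ of $\psi$, I would take the product map $\phi\times\tilde\psi$ on $F\times F^\perp$: since $G$ acts trivially on $F$, the arbitrary Euclidean isometry $\phi$ is vacuously $G$-equivariant, so $\phi\times\tilde\psi$ is a $G$-equivariant isometry of $V$ descending to $(\phi,\psi)$. I expect no serious obstacle, each part reducing to the product structure; the one point requiring care is verifying that $G$-equivariance of an affine isometry of $V$ genuinely forces the split form along $F\times F^\perp$ (in particular that the translation part lies in $F$), which is exactly what makes the descent compatible with the given splitting of $\Isom(V/G)$.
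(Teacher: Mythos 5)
Your proof is correct and takes essentially the same route as the paper: the paper states Lemma \ref{L:trivial} without a separate proof, presenting it as a direct consequence of the splitting $\Isom(V/G)=\Isom(F)\times \Isom(F^\perp/G)$ from \cite{GL14} and the fact that every isometry of $F^\perp/G$ fixes the origin, which are exactly the two facts you deduce everything from. The details you supply --- the Cartan fixed-point argument for maximality of $\OO(F)\times\Isom(F^\perp/G)$, and the observation that $G$-equivariance forces an affine isometry of $V$ to split along $F\times F^\perp$ with translation part in $F$ --- are precisely the routine verifications the paper leaves implicit.
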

These facts essentially mean that, for the purposes of the present article, we may assume that the $G$-representation $V$ has no trivial factors, that is, $F=0$.

\section{Lifting isometries}
\label{S:lifting}

The proof of Theorem \ref{MT} uses the smooth structure of orbit spaces, in the sense of G. Schwarz \cite{Schwarz80}. Given a $G$-manifold $M$, recall that a function $M/G\to\R$ is defined to be smooth if its composition with the projection $M\to M/G$ is a smooth function on $M$ in the usual sense; and that a map $M/G\to M'/G'$ is smooth if the pull back of smooth functions are smooth.

\begin{proof}[Proof of Theorem \ref{MT}]
Let $\varphi\in\Isom(V/G)_0$.
We may assume that $V$ has no trivial factors, so that $\varphi$ (as well as the isometries in a path connecting $\varphi$ to the identity) fix the origin --- see Section \ref{S:trivial},\todo{rephrased this sentence, referring to the new section 2} especially Lemma \ref{L:trivial}(b), (c).


By \cite{AR17}, the action of $\Isom(V/G)_0$ on $V/G$ is smooth in the sense of Schwarz, and so, by  \cite[Corollary (2.4)]{Schwarz80}, $\varphi$ lifts to a $G$-equivariant diffeomorphism $\tilde{\varphi}$ of $V$ that fixes the origin. To finish the proof it suffices to show that the \emph{differential}  $d\tilde{\varphi}_0$ at the origin is orthogonal and lifts $\varphi$.

Let $\lambda>0$, and denote by $\tilde{r}_\lambda:V\to V$ multiplication by $\lambda$, and by $r_\lambda:V/G\to V/G$ the induced map on $V/G$. Since $\varphi$ fixes the origin, it commutes with $r_\lambda$, so that $\tilde{r}_{\lambda}^{-1} \tilde{\varphi} \tilde{r}_\lambda$ is again a $G$-equivariant diffeomorphism of $V$ lifting $\varphi=r_\lambda^{-1}\varphi r_\lambda$. Taking $\lambda\to 0$, this converges to the differential $d\tilde{\varphi}_0$, which is then a linear $G$-equivariant lift of $\varphi$. This implies that $d\tilde{\varphi}_0$ takes the unit sphere $SV$ to itself, and in particular it is an orthogonal transformation.
\end{proof}

Next we mention ways in which Theorem \ref{MT} can and cannot be extended.

\begin{remark}
\label{R:connected}
The condition that $\varphi$ be in the \emph{connected component} of $\Isom(V/G)$ in the statement of Theorem \ref{MT} is necessary. To construct examples of non-liftable isometries, consider cohomogeneity two representations of $G$\todo{example has been phrased more generally}. Then the quotient $V/G$ is a sector in the plane, and its only non-trivial isometry is the reflection across the bisecting ray. Any lift of this isometry would swap the two non-trivial singular orbits corresponding to the two boundary rays of $V/G$. Thus, if these singular orbits are not diffeomorphic, such a lift cannot exist. For a concrete example, take $G=\SO(2)\times \SO(3)$ with its natural product action on $\R^2\times \R^3$. Then $V/G$ is a sector of angle $\pi/2$, and the two singular orbits are isometric to a $2$-sphere and a circle. For an example where the singular orbits have the same dimension, take the outer tensor product action of $G$ on $\R^2\otimes\R^3$. Here the quotient is isometric to a sector of angle $\pi/4$, and the isotropy groups along the two boundary rays are isomorphic to $\SO(2)$ and $\SO(2)\times\Z_2$ (see third line of Table E in \cite{GWZ08}). This means that the corresponding singular orbits are not diffeomorphic. More examples can be found in the same way by inspecting \cite[Table E]{GWZ08}.
 \end{remark}


\begin{remark}
\label{R:normalizer}
There is a natural class of isometries of $V$, larger than the group of $G$-equivariant isometries considered in Theorem \ref{MT}, which descend to isometries of $V/G$. It is given by the normalizer  $N(G)$ of the image of $G$ in $\OO(V)$. Remark \ref{R:connected} above shows that in general not every isometry of the quotient lifts to $N(G)$. 

However, when $G$ is finite, $N(G)$ does induce \emph{all} isometries $\varphi$ of $V/G$ that fix the origin, that is, all isometries of $SV/G$.  Indeed, the projection $\pi: SV\to SV/G$ and the map $\varphi\circ\pi: SV\to SV/G$ are universal orbifold coverings, hence $\varphi$ lifts to an isometry $f:SV\to SV$ (see \cite[Chapter 13]{Thurston} and \cite{Lange18}). Since the image of $G$ in $\OO(V)$ and its conjugate by $f$ are orbit-equivalent\footnote{For the purposes of this article, we say two subgroups $G_1, G_2\subset \OO(V)$, or more generally, two representations $G_1\to \OO(V)$, $G_2\to \OO(V)$, are \emph{orbit-equivalent} when they induce the same partition of $V$ into orbits.}\todo{clarified definition of ``orbit-equivalent''} finite subgroups of $\OO(V)$, they must coincide, that is, $f$ normalizes $G$.
\end{remark}

\begin{remark}
The conclusion of Theorem \ref{MT} may fail if the Euclidean space $V$ (or the round sphere $SV$) is replaced with a general Riemannian manifold. Indeed, consider the torus $M=S^1\times S^1$ with coordinates $x,y$, endowed with a warped product metric $dx^2+\rho(x)dy^2$. Then $G=S^1$ acts on $M$ by isometries (rotating the second $S^1$ factor\todo{added clarification}), with orbit space $M/G=S^1$. If the warping factor $\rho$ is not constant, that is, if the $G$-orbits do not all have the same length, then not every isometry in $\Isom(M/G)_0=S^1$ lifts to an isometry of $M$. 
\end{remark}

\section{Computing the connected component of the isometry group}
\label{S:computing}

The goal of this section is to give a general strategy to compute $\Isom(V/G)_0$, and to prove Theorems \ref{MT:cor} and \ref{MT:cor2}. We also give a simple formula for $\Isom(V/G)_0$ in the (in some sense typical) case where $V/G$ has no boundary, see Proposition \ref{P:boundary} and Remark \ref{R:boundary} below.

Let $V$ be an orthogonal representation of the compact group $G$, and assume for simplicity that $V$ is faithful (so that $G$ can be identified with a subgroup of $\OO(V)$), and that $V$ has no trivial factors (see Section \ref{S:trivial}\todo{added reference to Section 2}, especially Lemma \ref{L:trivial}(b)). Denote by $\Isom_G(V)$ the group of all $G$-equivariant isometries of $V$. Any element in this group descends to an isometry of the quotient, so we have a group homomorphism $\Isom_G(V)\to\Isom(V/G)$. Theorem \ref{MT} implies that the restriction to the identity component $\Isom_G(V)_0$ gives a surjective group homomorphism
\begin{equation} 
\label{E:p}
p: \Isom_G(V)_0 \to \Isom(V/G)_0.
\end{equation}
This points to a strategy for computing the isomorphism type of $\Isom(V/G)_0$, namely:\begin{enumerate}[(a)]
\item Determine the group $\Isom_G(V)_0$, and 
\item Determine the normal subgroup $\ker(p)$. 
\end{enumerate}

Task (a) is easily achieved using Schur's Lemma (see, for example, \cite[page 69]{BroeckertomDieck}). More precisely, one first decomposes 
\[V\cong \bigoplus_{i=1}^k V_i^{\oplus n_i}\]
where $V_i$ are pair-wise non-isomorphic irreducible $G$-representations. Each $V_i$ is of real, complex, or quaternionic type, according to whether the skew field $\operatorname{Hom}_G(V_i, V_i)$ is isomorphic to $\R$, $\C$, or $\HH$ (see  \cite[II.6]{BroeckertomDieck}). One then has the corresponding decomposition
\[\Isom_G(V)_0=\prod_{i=1}^k H_i\]
with each $H_i$ isomorphic to $\SO(n_i)$, $\U(n_i)$, or $\Sp(n_i)$, according to the type of $V_i$.

\begin{proof}[Proof of Theorem \ref{MT:cor2}]
When $V$ is irreducible, we have $k=1$ and $n_1=1$, so the ranks of $\Isom_G(V)_0$ and of $\Isom(V/G)_0$ are at most one. \todo{added proof of Theorem C}
\end{proof}
For a more detailed analysis of the irreducible case, see the end of Section \ref{S:computing}.

\begin{proof}[Proof of Theorem \ref{MT:cor}]
By Lemma \ref{L:trivial}(a),\todo{assume from the start of the proof that there are no trivial factors} we may assume $V$ has no trivial factors. By the discussion above, the Lie algebra of $\Isom(SV/G)=\Isom(V/G)$ is the quotient of a Lie algebra $\frak{h}$ by an ideal $I$, where $\frak{h}$ is the direct sum of a Abelian factor $\frak{a}$ with a finite number of simple Lie algebras isomorphic to  $\so(n)$, $\su(n)$, or $\spp(n)$. Then $I$ must be equal to the direct sum of some subspace of $\frak{a}$ with a finite collection of the simple factors of $\frak{h}$, by uniqueness of simple ideals. Thus the quotient $\frak{h}/I$ has the desired form.
\end{proof}

Task (b) can be approached using the characterization of $\ker(p)$ as the largest subgroup $L$ of $\Isom_G(V)_0$ with the property that the natural action of $L\times G$ on $V$ is orbit-equivalent to the original $G$-action. For example, when $G$ is finite, this implies that $\ker(p)$ is also finite. This characterization also leads to the following relationship with the presence of boundary (that is, codimension-one strata) in the orbit space:
\begin{proposition}
\label{P:boundary}
Let $G$ be a compact group, $V$  an orthogonal $G$-representation without trivial factors, and $p: \Isom_G(V)_0 \to \Isom(V/G)_0$ the surjective homomorphism considered in \eqref{E:p} above. Denote by $Z(G)$ the center of $G$. Then
\begin{enumerate}[(a)]
\item $\ker(p)$ contains $Z(G)\cap \Isom_G(V)_0$.
\item If $V/G$ has no boundary, then $\ker(p)=Z(G)\cap \Isom_G(V)_0$, so that
\[ \Isom(V/G)_0= \frac{\Isom_G(V)_0}{Z(G)\cap \Isom_G(V)_0}.\]
\end{enumerate}
\end{proposition}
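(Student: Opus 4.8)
The plan is to prove the two inclusions separately, with essentially all of the content lying in part (b). For part (a), the point is simply that $Z(G)\subseteq G\subseteq\OO(V)$, and a central element commutes with the entire $G$-action, so it belongs to $\Isom_G(V)$; being an element of $G$, it carries each orbit to itself and hence descends to the identity on $V/G$. This gives $Z(G)\cap\Isom_G(V)_0\subseteq\ker(p)$, which is (a), and incidentally shows that the denominator in the displayed formula is a genuine subgroup of $\Isom_G(V)_0$.

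For part (b), in view of (a) it suffices to prove the reverse inclusion. I would first reduce it to a statement about orbit-equivalent overgroups. If $f\in\ker(p)$, then $f$ is a $G$-equivariant isometry preserving every $G$-orbit; the key observation is that it is enough to show $f\in G$ (as a subgroup of $\OO(V)$), since then $f\in G\cap\Isom_G(V)$ is an element of $G$ commuting with all of $G$, i.e.\ $f\in Z(G)$, and $f\in\Isom_G(V)_0$ by hypothesis. So the goal becomes $\ker(p)\subseteq G$. Given such an $f$, set $\Gamma=\overline{\langle G,f\rangle}\subseteq\OO(V)$. Because $f$ centralizes $G$ and preserves every orbit, $\Gamma=G\cdot A$ with $A=\overline{\langle f\rangle}$ compact abelian centralizing $G$, with $G\triangleleft\Gamma$, and $\Gamma$ has \emph{exactly} the same orbits as $G$. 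Thus if $f\notin G$ then $\Gamma\supsetneq G$ is a proper, compact, orbit-equivalent overgroup of $G$, and the whole matter comes down to the claim that such an overgroup forces $V/G$ to have boundary, contradicting the hypothesis of (b).

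I would establish this claim by induction on $\dim V$ via the slice theorem. Comparing the equal orbits $\Gamma\cdot x=G\cdot x$ through any point $x$ yields $[\Gamma:\Gamma_x]=[G:G_x]$, so that $\Gamma_x\supsetneq G_x$ properly (in dimension and in number of components) at every $x$. The slice theorem identifies a neighborhood of $\bar x$ in $V/G$ with $N_x/G_x$ and in $V/\Gamma$ with $N_x/\Gamma_x$; since $V/G=V/\Gamma$, these agree, so $(G_x,N_x)$ and $(\Gamma_x,N_x)$ again form an orbit-equivalent pair. At a \emph{singular} point $x\neq0$ with positive-dimensional orbit this strictly lowers the dimension, and if the extra isotropy acts nontrivially on $N_x$ the pair is still proper, so by induction $N_x/G_x$ has boundary and hence so does $V/G$ near $\bar x$. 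Tracing the recursion to its base, one is reduced to a slice representation transitive on its unit sphere, i.e.\ with $N_x/G_x=[0,\infty)\times\R^k$ --- precisely a codimension-one stratum, giving the boundary. Once the claim is proven, no boundary forces $f\in G$, hence $\ker(p)\subseteq Z(G)$; combined with (a) this gives $\ker(p)=Z(G)\cap\Isom_G(V)_0$, and the displayed isomorphism $\Isom(V/G)_0\cong\Isom_G(V)_0/(Z(G)\cap\Isom_G(V)_0)$ then follows from the surjectivity of $p$ in \eqref{E:p} (Theorem \ref{MT}) together with the first isomorphism theorem.

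I expect the main obstacle to be the inductive claim, and specifically its base case: locating a slice representation transitive on its sphere and matching it with the codimension-one stratum. The delicate point is that at a \emph{principal} point the larger isotropy $\Gamma_x$ necessarily acts trivially on the slice (its slice orbits must match the trivial ones of $G_x$), so no reduction is possible there; one must instead produce a singular orbit at which $\Gamma_x$ acts nontrivially on $N_x$, and show that iterating this process cannot proceed indefinitely. Here the degenerate end of the induction, where $G$ would become finite, must be excluded --- a finite group admits no proper orbit-equivalent overgroup, since at a principal point the strictly larger isotropy would fix a full-measure subset of $V$ and therefore be trivial. Making the localization of orbit-equivalence to the slice representations precise, and pinning down this terminal sphere-transitive slice, are the steps where the argument must be carried out with care.
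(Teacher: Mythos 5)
Your part (a) is correct, and your reduction of part (b) is sound and in fact mirrors the paper's first move: the paper passes to the enlarged group $\hat{G}:=G\times\ker(p)$ (your $\Gamma=G\cdot A$) and notes that if $\ker(p)$ were strictly larger than $Z(G)\cap\Isom_G(V)_0=G\cap\Isom_G(V)_0$, then $\hat{G}$ would have strictly larger image in $\OO(V)$ than $G$ while having exactly the same orbits. The genuine gap is everything after that. Your proof of the key claim --- that a proper, compact, orbit-equivalent overgroup forces $V/G$ to have boundary --- is the inductive slice-theorem argument, and that argument is never actually carried out: as you yourself flag, (i) at principal points the extra isotropy acts trivially on the slice, so the inductive step cannot start there; (ii) you give no construction of a singular point at which the extra isotropy acts non-trivially on the slice; and (iii) you give no termination argument reaching the sphere-transitive base case. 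These are not finishing touches; they are the entire mathematical content of part (b). Indeed, there are cases where your induction can never take a single step: if $G$ acts freely on $SV$ --- e.g.\ $G=\SU(2)$ on $\C^2$, with overgroup generated by $G$ and the equivariant isometry $i\cdot\mathrm{id}\in\ker(p)\setminus G$ --- then the only singular point is the origin, whose slice is all of $V$, so no dimension reduction is available and the claim must be established outright for the representation itself.

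At exactly this juncture the paper does not argue from scratch: at a principal point $x$, the orbit identity $G\cdot x=\hat{G}\cdot x$ together with the strict inclusion of images forces the principal isotropy group $\hat{G}_x$ to act non-trivially on $V$; this yields a non-trivial Luna--Richardson reduction $V/G=V/\hat{G}=V^{\hat{G}_x}/N_{\hat{G}}(\hat{G}_x)$ \cite{LR79}, and then \cite[Proposition 1.1]{GL14} (non-trivial principal isotropy implies non-empty boundary) gives the contradiction. The claim you are missing is precisely what the paper extracts from that cited proposition, which is a genuine theorem with its own non-trivial proof, not a routine consequence of the slice theorem. To complete your argument you must either invoke \cite[Proposition 1.1]{GL14} as the paper does, or supply the missing construction and termination argument --- which would amount to reproving it.
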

\begin{proof}
\begin{enumerate}[(a)]
\item Elements of $Z(G)$ are clearly $G$-equivariant isometries, and act trivially on the quotient.
\item Assume for a contradiction that $\ker(p)$ is strictly larger than $Z(G)\cap \Isom_G(V)_0$. Then $G$ and $\hat{G}:=G\times \ker(p)$ have different images in $\OO(V)$, because $Z(G)\cap \Isom_G(V)_0=G\cap \Isom_G(V)_0$. 
Let $x\in SV$ be a principal point for the actions of both $G$ and $\hat{G}$. Since these actions are orbit-equivalent, we have in particular $G\cdot x={\hat{G}\cdot x}$. These are naturally identified with $G/G_x$ and $\hat{G}/\hat{G}_x$, which, together with the fact that $G$ and $\hat{G}$ have distinct images in $\OO(V)$, implies that the principal isotropy group $\hat{G}_x$ acts non-trivially on $V$. In other words, the fixed-point set $V^{\hat{G}_x}$ is a proper subspace of $V$, so that we have a non-trivial Luna--Richardson type reduction \cite{LR79}
\[ \frac{V}{G} = \frac{V}{\hat{G}} = \frac{V^{\hat{G}_x}}{N_{\hat{G}}(\hat{G}_x)} .\]
By \cite[Proposition 1.1]{GL14}, the orbit space $V/G$ has boundary, contradiction.
\end{enumerate}
\end{proof}

\begin{example}
Consider the Hopf action of $G=\U(1)$ on $V=\R^4=\C^2$. As a representation, $V$ decomposes as the direct sum of two copies of the irreducible representation, of complex type, of $G$ on $\R^2=\C$. By Schur's Lemma, $\Isom_G(V)_0$ is isomorphic to $\U(2)$. The quotient $SV/G$ is isometric to the $2$-sphere of radius $1/2$, and hence $\Isom(V/G)_0=\Isom(SV/G)_0$ is isomorphic to $\SO(3)$. Thus the map $p$ in Equation \eqref{E:p} corresponds to the quotient of $\U(2)$ by its center, which is also equal to (the center of) $G$:
\[ p: \U(2)\to \PSU(2)\cong\SO(3).\]
Since $V/G$ has no boundary, this illustrates Proposition \ref{P:boundary}(b).
\end{example}

\begin{remark}
\label{R:boundary}
In some sense lack of boundary in $V/G$ is the typical case, and thus Proposition \ref{P:boundary}(b)  allows one to easily compute $\Isom(V/G)_0$ for ``most'' representations. For example, if $G$ is a simple Lie group, the classification of all representations $V$ such that $V/G$ has boundary has been announced by\todo{removed citation to preprint GKW, and use the phrase ``announced by''}
 C. Gorodski, A. Kollross, and B. Wilking. In particular, one would be able to decide whether Proposition \ref{P:boundary}(b) is applicable by checking a given representation against their list.
\end{remark}

We conclude by specializing the discussion in this section to the case where $V$ is irreducible, thus sharpening the conclusion of Theorem \ref{MT:cor2}.\todo{replaced ``give a proof of Thm C'' with ``sharpening the conclusion of Thm C''}

If $V$ is of real type, then $\Isom(V/G)_0$ is a quotient of $\SO(1)$, hence trivial. That is, $\Isom(V/G)$ is a finite group.

If $V$ is of complex type, then $\Isom(V/G)_0$ is a quotient of $\U(1)$, hence either trivial, or isomorphic to $\U(1)$. Both cases occur. For example, if $G$ is finite, then $\Isom(V/G)_0\cong \U(1)$, because $G$ and $\U(1)\times G$ cannot be orbit-equivalent. On the other hand, the representation of $\U(1)\times G$ on $V$ is still irreducible of complex type, and $\Isom(V/(\U(1)\times G))_0$ is trivial.

If $V$ is of quaternionic type, then $\Isom(V/G)_0$ is  a quotient of $\Sp(1)$, hence either trivial, or isomorphic to either $\Sp(1)$ or $\SO(3)$.\todo{removed application of preprint ``GKW''}


\end{document}